\setlist[enumerate,itemize]{noitemsep}
\setlist[enumerate,1]{label=(\roman*)}
\newtheorem{theorem}{Theorem}
\title{Lonely passengers: a short proof}
\author{John Haslegrave}
\begin{document}
	\maketitle
\begin{abstract}
	A fixed number of passengers independently board one of several buses uniformly at random. The lonely passenger problem is to prove that the probability of at least one passenger being the only one in their bus is increasing in the number of buses. It was solved in a strong form by Imre P\'eter T\'oth, who proved stochastic dominance of the number of such passengers as the number of buses increases, but observed that, surprisingly, no short proof was known ``despite the efforts of several experts''. We give a very short proof of the weaker result. The proof of the strong form, using the same idea, is more involved but still relatively short.
\end{abstract}

\section{Introduction}
	Suppose $n\geq 2$ passengers board $k\geq 1$ buses, with each passenger independently choosing a bus uniformly at random. We say that a passenger is \textit{lonely} if no other passengers board the same bus. If we keep $n$ fixed, but increase $k$, does this make it (strictly) more likely that at least one passenger is lonely?
	
	Intuitively we might expect the answer to be ``yes'', and indeed it is trivial that the expected number of lonely passengers is strictly increasing in the number of buses. However, this problem, originally posed by Imre P\'eter T\'oth in September 2023, to illustrate a key difficulty in a conjecture on random walks, remained unsolved for some time, and the proof eventually obtained in 2025 \cite{Toth} is surprisingly long and intricate. T\'oth remarked that the problem had resisted the efforts of several experts in probability, with multiple incorrect solutions put forward, and that it was surprising that no short proof of this deceptively simple statement had been found.
	
	In this paper we present a very short proof of the problem as described above. We then adapt the method to prove the stronger version actually given by T\'oth's solution, that there is stochastic dominance in the number of lonely passengers, i.e.\ $L^{(k)}_n\prec L^{(k+1)}_n$, where the random variable $L^{(k)}_n$ is the number of lonely passengers when $n$ passengers board $k$ buses. We write $p_{n,k,r}$ for $\mathbb P(L^{(k)}_n\geq r)$; strict stochastic dominance requires that $p_{n,k,r}\leq p_{n,k+1,r}$ for each $r$ with strict inequality in at least one case (which will be the case $r=1$).
	
	We remark, following \cite{Toth}, that this problem arose from a conjecture of L\'aszl\'o M\'arton T\'oth, which remains open. Consider a simple symmetric random walk of length $n$ on an infinite tree $T$, having all degrees at least $3$. A \textit{regeneration} for this walk is an edge that is crossed exactly once. T\'oth conjectured that the probability of a regeneration is minimised when $T$ is the $3$-regular tree, or more generally that the number of regenerations is stochastically minimal in the same case. The lonely passenger problem was developed in order to highlight a specific obstacle in analysing this problem.
	This conjecture would have consequences for the theory of Ramanujan graphings, introduced by Backhausz, Szegedy and Vir\'ag \cite{BSV} as a graph limit analogue of Ramanujan graphs (see \cite{Lov} for background on graph limits). In particular, it would imply that Bernoulli graphings of unimodular Galton--Watson trees are Ramanujan. (See \cite[Section 1]{AL07} for the construction of unimodular Galton--Watson trees.)

\section{Proofs}
	We first give the short proof that the probability of a lonely passenger is strictly increasing in the number of buses. The basic idea is that we can simulate the $k$-bus case by starting with $k+1$ buses, and then taking bus $k+1$ out of service. We then directly compare the probability that there is a lonely passenger before reassignment but not afterwards with the reverse.
	\begin{theorem}\label{first}For each $n\geq 2$ and $k\geq 1$ we have $p_{n,k+1,1}>p_{n,k,1}$.
	\end{theorem}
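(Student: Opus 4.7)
I would follow the coupling suggested in the hint: take $X_1,\dots,X_n$ i.i.d.\ uniform on $[k+1]$ (giving the $A$-assignment), and independently $U_i$ uniform on $[k]$ for each $i$; define the $B$-assignment by $Y_i = X_i$ if $X_i \le k$ and $Y_i = U_i$ if $X_i = k+1$. Marginally $A$ is uniform on $[k+1]^n$ and $Y$ is uniform on $[k]^n$, so letting $E$ and $F$ denote the events of a lonely passenger in $A$ and in $Y$ respectively, we have $p_{n,k+1,1} - p_{n,k,1} = \Pr(E\setminus F) - \Pr(F\setminus E)$, and it suffices to show this is strictly positive.

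The key structural observation concerns the form of $\omega \in F\setminus E$. Any passenger $i$ lonely in $B$ at bus $j = Y_i$ must satisfy $X_i = k+1$: otherwise $Y_i = X_i \le k$ and the absence of any other $l$ with $Y_l = j$ forces no other $X_l = j$ either, which would make $i$ lonely in $A$ and contradict $\omega \notin E$. Consequently, on $F\setminus E$ one always has $A_{k+1}\ge 2$ (else bus $k+1$ would itself be a lonely bus in $A$), $U_i$ is an empty bus of $A$, and no other bus-$(k+1)$ passenger has $U_l = U_i$. A similar but more permissive description applies to $E\setminus F$.

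My plan is to construct an injection $\phi\colon F\setminus E \to E\setminus F$. Given $\omega\in F\setminus E$, select the smallest lonely-in-$B$ passenger $i^*$ (so $X_{i^*}=k+1$ and $j^*:=U_{i^*}$ is empty in $A$) and the smallest other bus-$(k+1)$ passenger $i^{**}$, and define $\omega'$ by moving $i^{**}$ from bus $k+1$ to bus $j^*$ in the $A$-assignment, i.e.\ $X'_{i^{**}}=j^*$, leaving all other coordinates of $(X,U)$ unchanged. A direct check shows that bus $j^*$ of $A'$ is the singleton $\{i^{**}\}$, so $\omega'\in E$; and in $B'$, bus $j^*$ now contains both $i^*$ (still directed there via its unchanged $U$-value) and $i^{**}$, hence has size two and is no longer lonely.

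The hard part will be handling two issues: (i)~bus $U_{i^{**}}$ loses $i^{**}$ in $B'$ and may become a new singleton there, pushing $\omega'$ into $E\cap F$; and (ii)~if $\omega$ has multiple lonely-in-$B$ passengers, the remaining ones still contribute singletons to $B'$. For (i), I expect to refine the choice of $i^{**}$ (when $A_{k+1}\ge 3$ there are several candidates, and one can verify that at least one avoids the bad bus size) or else to add a compensating modification that redirects another $U$-value to repair the new singleton. Issue (ii) I would handle by iterating the construction over all lonely-in-$B$ passengers simultaneously, absorbing each one into a safe bus. Strictness of the resulting inequality should follow by exhibiting outcomes in $E\setminus F$ outside the image of $\phi$ — for instance, configurations with $A_{k+1}=1$ arise from a different geometry (the lonely-in-$A$ witness is on bus $k+1$) and are not produced by $\phi$, providing a positive surplus whenever $\Pr(A_{k+1}=1)>0$.
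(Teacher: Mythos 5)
Your coupling is essentially the paper's (the paper draws $Y_1,\dots,Y_n$ i.i.d.\ uniform on $[k]$ and reassigns the bus-$(k+1)$ passengers to $Y_1,\dots,Y_m$ in order, rather than attaching a $U_i$ to each passenger, but the marginals and the reduction to $\mathbb P(E\setminus F)>\mathbb P(F\setminus E)$ are identical), and your structural analysis of $F\setminus E$ is correct. However, the core of your argument --- the injection $\phi\colon F\setminus E\to E\setminus F$ --- is not actually constructed: the two issues you flag are genuine obstructions, not loose ends. For (ii), take $A_{k+1}=3$ with all three bus-$(k+1)$ passengers lonely in $B$ (each $U$-value a distinct otherwise-unhit empty bus); moving $i^{**}$ to $U_{i^*}$ leaves the third passenger still lonely in $B'$, so $\phi(\omega)\in E\cap F$, and repairing this forces a case-dependent map whose injectivity and well-definedness you have not established. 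For (i), when $A_{k+1}=2$ there is a \emph{unique} candidate $i^{**}$, and if bus $U_{i^{**}}$ has exactly two $B$-passengers the move creates a new $B'$-singleton; there is no alternative choice to fall back on, and ``redirecting another $U$-value'' modifies coordinates in a way that destroys the measure-preservation your argument needs. Your strictness argument is also flawed as stated: $\phi$ removes a passenger from bus $k+1$, so whenever $A_{k+1}=2$ the image \emph{does} have $A'_{k+1}=1$, contradicting your claim that such configurations lie outside the range.

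It is worth seeing how the paper sidesteps all of this: it never builds an injection from $F\setminus E$ into $E\setminus F$. Instead it covers $A'\cap A^{\mathsf c}\cap\{X_{k+1}=m\}$ by a larger event $B'_m$ (requiring only that some reassigned passenger lands on an empty bus, with no condition on what else happens), bounds $\mathbb P(B'_m)\leq m\,\mathbb P(C_{m,1})$ by a union bound over which reassigned passenger hits the empty bus, and then exhibits an exact bijection showing $m\,\mathbb P(C_{m,1})=\mathbb P(B_m)$ for a disjoint family $B_m\subseteq A\cap A'^{\mathsf c}$. The deliberate slack in the union bound and in the inclusion $B'_m\supseteq A'\cap A^{\mathsf c}\cap\{X_{k+1}=m\}$ absorbs exactly the multiplicities and side effects that break your pointwise map, and the strict inclusion at $m=n$ supplies strictness cleanly. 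If you want to salvage your approach, replacing ``injection'' by ``covering plus over-counted bijection'' in this way is the missing idea.
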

	\begin{proof}
	First assign passengers independently and uniformly to $k+1$ buses, and generate a list $Y_1,\ldots, Y_n$ of i.i.d.\ random variables uniformly distributed on $[k]$, independently of the assignment to buses. We say that a \textit{configuration} is an element of $[k+1]^n\times [k]^n$, giving the buses to which passengers $1$ to $n$ are initially assigned, followed by the values of $Y_1$ to $Y_n$. Note that all configurations are equally likely. Then reassign any passengers on bus $k+1$ to buses $Y_1,\ldots, Y_m$, where $m$ is the number of passengers needing reassignment. Note that the allocation after reassignment follows the law for $k$ buses.
	
	Let $X_1,\ldots,X_{k+1}$ be the number of passengers initially assigned to each bus, and let $X'_1,\ldots,\allowbreak X'_{k}$ be the number after reassignment. Consider the events $A=\{\exists i\in[k+1]:X_i=1\}$ and $A'=\{\exists i\in[k]:X'_i=1\}$; clearly it suffices to show that $\mathbb P(A\cap A'^{\mathsf c})>\mathbb P(A'\cap A^{\mathsf c})$.
	
	For each $m\in\{2,\ldots,n\}$ let $B_m$ be the event that $X_{k+1}=1$ and $X_{Y_1}=m-1$, but $X_i\neq 1$ for all $i\neq Y_1,k+1$. We have $B_m\subseteq A\cap A'^{\mathsf c}$, since $X_{k+1}=1$ but $X'_i=X_i\neq 1$ if $i\neq Y_1, k+1$ and $X'_{Y_1}=m>1$. The $B_m$ are disjoint, so $\mathbb P(A\cap A'^{\mathsf c})\geq\sum_{m=2}^n\mathbb P(B_m)$.
	
	Similarly, for $m\in\{2,\ldots,n\}$ let $B'_m$ be the event that $X_{k+1}=m$ and $X_i\neq 1$ for all $i\leq k$, and $X_{Y_j}=0$ for at least one $j\leq m$. Note that $B'_m\supseteq A'\cap A^{\mathsf c}\cap\{X_{k+1}=m\}$, and this inclusion is strict for $m=n$, since if $X_{k+1}=n$ and the $Y_i$ are all equal then $B'_n$ holds but $A'$ doesn't. Consequently (noting that $A'\cap A^{\mathsf c}\cap\{X_{k+1}\leq 1\}=\varnothing$) we have $\mathbb P(A'\cap A^{\mathsf c})< \sum_{m=2}^n\mathbb P(B'_m)$.
	
	Let $C_{m,i}$ for $i\leq m$ be the event $B'_m\cap\{X_{Y_i}=0\}$. Clearly $B'_m=\bigcup_{i=1}^m C_{m,i}$, and $\mathbb P(C_{m,i})$ does not depend on $i$, giving $\mathbb P(B'_m)\leq m\mathbb P(C_{m,1})$. 
	To complete the proof, we observe that $\mathbb P(B_m)= m\mathbb P(C_{m,1})$. Indeed, there is a bijection between configurations where $B_m$ occurs and pairs $\sigma, i$ where $\sigma$ is a configuration where $C_{m,1}$ occurs, and $i$ is one of the $m$ passengers assigned to bus $k+1$ in $\sigma$. In one direction, move all passengers from bus $Y_1$ to bus $k+1$; in the other, move all but passenger $i$ from bus $k+1$ to bus $Y_1$.
	\end{proof}
	Next we develop the same method to prove stochastic dominance. 
	\begin{theorem}
		For each $n\geq 2$ and $k\geq 1$ we have $L_n^{(k+1)}\succ L_n{(k)}$.
	\end{theorem}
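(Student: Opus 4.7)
My plan is to extend the argument of Theorem~\ref{first} by a finer stratification, tracking not just the ``pivot bus'' size but also how many other lonely buses are present. Using the same coupling I will write $L = L_n^{(k+1)}$, $L' = L_n^{(k)}$, $A_r = \{L \geq r\}$, $A_r' = \{L' \geq r\}$, and $S = \#\{i \in [k] : X_i = 1\}$, so that $L = S + \mathbf{1}_{X_{k+1} = 1}$. The goal will be to show $\mathbb{P}(A_r \cap A_r'^{\mathsf c}) \geq \mathbb{P}(A_r' \cap A_r^{\mathsf c})$ for every $r \geq 1$. As in Theorem~\ref{first}, the case $X_{k+1} = 0$ will contribute nothing and the case $X_{k+1} = 1$ only non-negatively (a single-passenger reassignment cannot increase the lonely count), so all the real work will lie in the case $X_{k+1} = m \geq 2$.

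For $m \geq 2$ and $s \geq 0$ I will introduce the events
\begin{align*}
B_{m, s} &= \{X_{k+1} = 1,\ X_{Y_1} = m - 1,\ \#\{i \in [k]\setminus\{Y_1\} : X_i = 1\} = s\},\\
C_{m, s} &= \{X_{k+1} = m,\ X_{Y_1} = 0,\ \#\{i \in [k]\setminus\{Y_1\} : X_i = 1\} = s\}.
\end{align*}
Theorem~\ref{first}'s bijection (moving the $m - 1$ passengers on bus $Y_1$ to bus $k+1$, recording which passenger was originally on $k+1$) preserves the count of lonely buses outside $\{k+1, Y_1\}$, and so it gives $\mathbb{P}(B_{m, s}) = m\,\mathbb{P}(C_{m, s})$. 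On $B_{m, s}$ one has $L = s + 1 + \mathbf{1}_{m = 2}$ and $L' = s$, so $B_{m, s}$ lies inside $A_r \cap A_r'^{\mathsf c}$ precisely when $(m, s) \in I_r := \{(2, r - 2), (2, r - 1)\} \cup \{(m, r - 1) : m \geq 3\}$; summing will yield $\mathbb{P}(A_r \cap A_r'^{\mathsf c}) \geq \sum_{(m, s) \in I_r}\mathbb{P}(B_{m, s})$. In the opposite direction, any configuration in $A_r' \cap A_r^{\mathsf c} \cap \{X_{k+1} = m, S = s\}$ must have at least $G \geq r - s$ newly-lonely buses, and double-counting over the corresponding $m$ indices $j \in [m]$ with $X_{Y_j} = 0$ and $X_{Y_j}^{\mathrm{add}} = 1$ will give
\[
(r - s)\,\mathbb{P}\bigl(A_r' \cap A_r^{\mathsf c} \cap \{X_{k+1} = m,\ S = s\}\bigr) \;\leq\; m\,\mathbb{P}(C_{m, s}).
\]

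The hard part will be matching up the two sums. Summing the upper bound over $s \leq r - 1$ and using $m\,\mathbb{P}(C_{m,s}) = \mathbb{P}(B_{m,s})$ yields $\mathbb{P}(A_r' \cap A_r^{\mathsf c}) \leq \sum_{m \geq 2,\ s \leq r-1}\mathbb{P}(B_{m, s})/(r-s)$, whereas the available lower bound from $I_r$ weights only $s = r - 1$ together with the special pair $(2, r-2)$; small examples show that the raw inequality between these weighted sums can in fact fail, so I will need to be more careful. My plan for resolving this is to retain the information discarded in the step ``${\leq}\,m\,\mathbb{P}(C_{m,s})$'' --- namely that the configurations in question already satisfy $L' \geq r$ --- and to iterate the Theorem~\ref{first} bijection along the ``surplus'' newly-lonely buses. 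This should produce, for each such configuration together with a chosen newly-lonely witness, a unique $B_{m, s'}$ configuration with $s' \geq r - 1$, thereby providing the missing slot on the lower-bound side. Verifying that this chained correspondence is well-defined and injective, so that the weighted sums balance, will be the delicate step, and I expect it to be the main technical content of the proof.
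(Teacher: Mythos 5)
Your setup is sound and, up to a point, tracks the paper's actual proof closely: the coupling, the reduction to comparing $\mathbb P(A_r\cap A_r'^{\mathsf c})$ with $\mathbb P(A_r'\cap A_r^{\mathsf c})$, the dismissal of the cases $X_{k+1}\leq 1$, the identity $\mathbb P(B_{m,s})=m\,\mathbb P(C_{m,s})$, and the double-counting bound $(r-s)\,\mathbb P(\cdot)\leq m\,\mathbb P(C_{m,s})$ are all correct, and your refined $C_{m,s}$ with a distinguished witness is essentially the paper's $E_{m,\ell,S}$. But the argument stops exactly where the difficulty begins. You rightly observe that the two weighted sums do not balance and that the naive inequality can fail; the proposed remedy (``iterate the bijection along the surplus newly-lonely buses'') is only a declaration of intent, and as aimed it cannot succeed, because the target events $B_{m,s'}$ are structurally too coarse. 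A configuration in $B_{m,s}$ empties only the single bus $Y_1$ onto bus $k+1$, so under reassignment it loses at most $1+\mathbf 1_{m=2}\leq 2$ lonely passengers, while the configurations you must absorb come with multiplicity $r-s$, one witness for each of $r-s$ distinct newly-lonely buses; any inverse map must remember all of these buses simultaneously, which a single merged bus $Y_1$ cannot encode.

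What the paper does at this juncture is introduce a genuinely new family of lower-bound events: configurations with $X_{k+1}=1$ in which $Y_1,\dots,Y_{J-1}$ are \emph{distinct singleton buses}, where $J$ is the least index with $X_{Y_J}\neq 1$, so that reassignment merges $J-1$ lonely buses at once (these events are stratified into $D_{m,\ell}$ by the value of $J$ and of $X_{Y_J}$). Each upper-bound configuration in $E_{m,\ell,[r-\ell]}$ is then associated with $\frac{m!}{(m+\ell-r)!}=(r-\ell)!\binom{m}{r-\ell}$ such configurations, obtained by leaving one of the $m$ passengers of bus $k+1$ behind, placing one on each of $Y_1,\dots,Y_{r-\ell-1}$, and the remaining $m+\ell-r$ on $Y_{r-\ell}$; the extra factor $(r-\ell)!$ coming from the ordering is precisely what beats the $\binom{m}{r-\ell}$ loss on the other side. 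Constructing this map, checking its image lands in the correct stratum $D_{m,\ell}$ (including the edge case $m+\ell-r=1$), and verifying injectivity constitute the substance of the theorem, and none of this is present in your proposal. In short, you have correctly located the obstacle but not overcome it, and the step you defer as ``delicate'' is the theorem.
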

	\begin{proof}
		In view of Theorem \ref{first} it is sufficient to prove $p_{n,k+1,r}\geq p_{n,k,r}$ for every $n\geq r\geq 2$ and $k\geq 1$.
		We use the same setup of configurations and reassignment as before. We wish to show that $\mathbb P(A_r\cap A_r'^{\mathsf c})\geq\mathbb P(A_r'\cap A_r^{\mathsf c})$, where $A_r$ is the event that at least $r$ of $X_1,\ldots, X_{k+1}$ take value $1$, and $A'_r$ is the corresponding event for $X'_1,\ldots,X'_k$.
		
		For each $m\geq 2$ and $\ell\geq 0$ with $0<r-\ell\leq m$, let $D'_{m,\ell}$ be the event that all the following hold:
		\begin{itemize}
			\item $X_{k+1}=m$;
			\item there are exactly $\ell$ choices of $i\leq k$ such that $X_i=1$;
			\item there are at least $r-\ell$ values of $s\leq k$ such that $X_s=0$ and there is a unique $j\leq m$ with $Y_j=s$; and
			\item if there are exactly $r-\ell$ such $s$, then there is no $j\leq m$ with $X_{Y_j}=1$.
		\end{itemize}
		Note that these events are disjoint. We claim their union contains $A'_r\cap A_r^{\mathsf c}$. Indeed, for any configuration $\sigma$ in $A'_r\cap A_r^{\mathsf c}$, we must have $X_{k+1}=m$ for some $m\geq 2$, the number of $i\leq k$ with $X_i=1$ must be $\ell$ for some $\ell<r$, and there must be at least $r-\ell$ choices of $s\leq k$ such that $X_s=0$ but $X'_s=1$. Each such $s$ satisfies $Y_j=s$ for exactly one $j\leq m$, since the number of such $j$ is precisely $X'_s-X_s$. Furthermore, if there are exactly $r-\ell$ such $s$, then for each $i\leq k$ with $X_i=1$ we also have $X'_i=1$, i.e.\ there is no $j\leq m$ with $Y_j=i$. Thus $\sigma\in D'_{m,\ell}$. (However, configurations in $D'_{m,\ell}$ might not be in $A'$, since we do not forbid configurations that gain a lonely passenger in $r-\ell+1$ buses but lose one in $2$ buses.) Consequently, writing $P=\{(m,\ell)\in\mathbb Z^2:m\geq 2\text{ and }\ell\geq 0\text{ and }0<r-\ell\leq m\}$, we have 
		\begin{equation}\mathbb P(A'_r\cap A_r^{\mathsf c})\leq\sum_{(m,\ell)\in P}\mathbb P(D'_{m,\ell}).\label{eq:a-dprime}\end{equation}
		
		For each $S\in\binom{[m]}{r-\ell}$ (where $(m,\ell)\in P$ is fixed), we define $E_{m,\ell,S}$ to be the event that $D'_{m,\ell}$ occurs with the elements of $S$ satisfying the third point above. Since each $E_{m,\ell,S}$ is equiprobable, and their union is $D'_{m,\ell}$, we have 
		\begin{equation}\mathbb P(D'_{m,\ell})\leq\binom{m}{r-\ell}\mathbb{P}(E_{m,\ell,[r-\ell]}).\label{eq:dprime-e}\end{equation}
		
		Next we define corresponding events $D_{m,\ell}\subseteq A_r\cap A'^{\mathsf c}_r$. For a configuration $\sigma$, let $J(\sigma)$ be the minimum $j\geq 1$ such that $X_{Y_{J(\sigma)}}\neq 1$, if it exists, and set $J(\sigma)=0$ otherwise. Let $D$ be the set of configurations $\sigma$ with the following properties:
		\begin{enumerate}
			\item $X_{k+1}=1$;
			\item the number of $i\leq k$ with $X_i=1$ is either $r-1$ or $r$; 
			\item $J(\sigma)>1$; and
			\item if $j,j'< J(\sigma)$ with $j\neq j'$ then $Y_{j}\neq Y_{j'}$.
		\end{enumerate}
		Note that any such $\sigma$ is in $A_r$ by (i) and (ii), but since (iii) gives $X_{Y_1}=1$, we have $X'_{Y_1}=2$ but $X'_i=X_i$ for all other $i\leq k$, which together with (ii) gives $\sigma\not\in A'_r$. Thus $D\subseteq A_r\cap A'^{\mathsf c}_r$.
		
		Define $D_{m,\ell}$ to be the set of $\sigma\in D$ such that one of the following holds:
		\begin{itemize}
			\item the number of $i\leq k$ with $X_i=1$ is $r-1$ and $J(\sigma)+X_{Y_{J(\sigma)}}=m$ and $r-J(\sigma)=\ell$; or
			\item the number of $i\leq k$ with $X_i=1$ is $r$ and $J(\sigma)=m$ and $r-J(\sigma)+1=\ell$.
		\end{itemize}
		Clearly each $\sigma\in D$ uniquely determines integers $m,\ell$ with $\sigma \in D_{m,\ell}$. Note that (iii) implies $m\geq 2$ and (ii), (iii) and (iv) together give $2\leq J(\sigma)\leq r$ in the first case and $2\leq J(\sigma)\leq r+1$ in the second, which implies $\ell\geq 0$ and $0<r-\ell\leq m$.
		Thus the events $(D_{m,\ell})_{(m,\ell)\in P}$ partition $D$, giving
		\begin{equation}\mathbb P(A_r\cap A'^{\mathsf c}_r)\geq\mathbb P(D)=\sum_{(m,\ell)\in P}\mathbb P(D_{m,\ell}).\label{eq:a-d}\end{equation}
		Finally we claim that, for each $(m,\ell)\in P$,
		\[\mathbb P(D_{m,\ell})\geq\frac{m!}{(m+\ell-r)!}\mathbb P(E_{m,\ell,[r-\ell]}),\]
		which together with \eqref{eq:a-dprime}, \eqref{eq:dprime-e} and \eqref{eq:a-d} (and $\frac{m!}{(m+\ell-r)!}=(r-\ell)!\binom{m}{r-\ell}$) completes the proof.
		
		To prove the claim we first associate each configuration $\sigma\in E_{m,\ell,[r-\ell]}$ with the $\frac{m!}{(m+\ell-r)!}$ configurations that can be obtained in the following way: of the $m$ passengers on bus $k+1$ in $\sigma$, leave one there, put one on each bus $Y_i$ for $i\leq r-\ell-1$, and put the remaining $m+\ell-r$ on bus $Y_{r-\ell}$. Let $\tau$ be some configuration obtained from $\sigma$ in this way. Suppose $m+\ell-r\neq 1$. Then, for $\tau$, we have $X_{k+1}=1$, the number of $i\leq k$ with $X_i=1$ is $(r-\ell-1)+\ell=r-1$, and $J(\tau)=r-\ell>1$, and for each $j<J(\sigma)$ there is no other $j'\leq m$ with $Y_{j'}=Y_j$ by definition of $E_{m,\ell,[r-\ell]}$. Furthermore, $J(\tau)+X_{Y_{J(\tau)}}=m$ and $r-J(\tau)=\ell$, so $\tau\in D_{m,\ell}$. Alternatively, suppose $m+\ell-r= 1$. Then, for $\tau$, we have $X_{k+1}=1$, $X_{Y_1}=\cdots=X_{Y_{r-\ell}}=1$, and the number of $i\leq k$ with $X_i=1$ is $(r-\ell)+\ell=r$. Since $r-\ell+1=m$, we obtain by definition of $E_{m,\ell,[r-\ell]}$ that $Y_{r-\ell+1}$ is different from $Y_1,\ldots,Y_{r-\ell}$, and so bus $Y_{r-\ell+1}$ has the same number of passengers in $\tau$ as in $\sigma$. That number cannot be $1$, since this would contradict the definition of $E_{m,\ell,[r-\ell]}$. Thus $J(\tau)=r-\ell+1=m$. It follows that $Y_1,\ldots,Y_{J(\tau)-1}$ are all different, and so $\tau\in D_{J(\tau),r-J(\tau)+1}=D_{m,\ell}$.
		
		It is now immediate that each $\tau\in D_{m,\ell}$ is associated with at most one $\sigma\in E_{m,\ell,[r-\ell]}$, since the only possible configuration from which $\tau$ may be obtained in the above manner is that given by moving all passengers on buses $Y_1,\ldots,Y_{r-\ell}$ (in $\tau$) to bus $k+1$. (Note, however, that this might fail to be in $E_{m,\ell,[r-\ell]}$, since some $Y_j$ with $j\leq r-\ell$ might coincide with some $j'$ with $r-\ell<j'\leq m$.) This proves the claim, completing the proof.
	\end{proof}

\end{document}